\newtheorem{theorem}{Theorem}
\newtheorem{df}{Definition}
\newtheorem{lem}{Lemma}
\newtheorem{prop}{Proposition}
\newtheorem{cor}{Corollary}
\title{New classes of Picard operators}
\author{{Pa\c sc G\u avru\c ta, Laura Manolescu} }
\date{}
\begin{document}

\maketitle

\begin{minipage}{120mm}
\small{\bf Abstract.} { An operator $T$ on metric space $(X,d)$ is called a Picard operator if $T$ has a unique fixed point $u$ in $X$ and for any $x\in X$, the sequence $\{T^nx\}_{n\in\mathbb{N}}$ converge to $u$.

In this paper, we give new results concerning the existence of Picard operators.
}\\

{\bf Keywords} {Picard operator fixed point}\\

{\bf 2020 Mathematics Subject Classification: 47H10; 54H25 } \\

\end{minipage}
\section{Introduction}

The fixed point theorems have various application in chemistry, biology,
computer sciences, differential equations, existence of invariant subspaces of
linear operators, Hyers-Ulam-Rassias stability and much more. Because of this, many scientists work on developing new fixed point theorems. See, for example, the book \cite{Rus3}. 
\\

Let $(X,d)$ be a metric space and $T:X\rightarrow X$ be a mapping.

$T$ is called a \textit{Picard operator} if $T$ has a unique fixed point $u$ in $X$ and for any $x\in X$, the sequence $\{T^nx\}_{n\in\mathbb{N}}$ converge to $u$ (\cite{Rus1}, \cite{Rus2}) .

\begin{df} $T$ is called contractive if it satisfies \[d(Tx, Ty)< d(x,y),\quad\textrm{for}~x,y\in X,~x\neq y.\]
\end{df}
The following proposition is a well known result.
\begin{prop}\label{firstprop}
Let $(X,d)$ be a complete metric space and $T:X\rightarrow X$ be contractive. If $(\forall)~x\in X$ the sequence $\{T^nx\}$ is Cauchy, then $T$ is Picard operator.
\end{prop}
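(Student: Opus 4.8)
The plan is to combine completeness of $X$ with the mild regularity that contractivity automatically forces on $T$, since one cannot appeal to the Banach contraction principle here (contractivity is strictly weaker than being a contraction, so there is no contraction rate to exploit). First I would fix an arbitrary $x\in X$; by hypothesis the orbit $\{T^nx\}$ is Cauchy, and since $(X,d)$ is complete it converges to some point, which I will call $u_x\in X$.

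Next I would record that a contractive map is continuous: given $\varepsilon>0$, the choice $\delta=\varepsilon$ works, because if $0<d(x,y)<\delta$ then $d(Tx,Ty)<d(x,y)<\varepsilon$, while the case $x=y$ is trivial. Feeding the convergent sequence $T^nx\to u_x$ through the continuous map $T$ then gives $Tu_x=\lim_n T(T^nx)=\lim_n T^{n+1}x=u_x$, so $u_x$ is a fixed point of $T$. In particular $T$ has at least one fixed point.

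Then I would establish uniqueness: if $u\neq v$ were two distinct fixed points, contractivity would yield $d(u,v)=d(Tu,Tv)<d(u,v)$, a contradiction; hence $T$ has exactly one fixed point, say $u$, and every point $u_x$ produced above must coincide with $u$.

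Finally I would assemble the pieces. For an arbitrary $x\in X$ the orbit $\{T^nx\}$ is Cauchy, hence convergent by completeness, and by the argument above its limit is a fixed point, hence equals the unique fixed point $u$. Therefore $T^nx\to u$ for every $x\in X$, which is precisely the assertion that $T$ is a Picard operator. I do not expect a genuine obstacle in this argument; the only point needing care is conceptual rather than technical — the existence of the fixed point must be drawn from the Cauchy hypothesis on the orbits rather than from any contraction estimate, and that is exactly the role that hypothesis plays.
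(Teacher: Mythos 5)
Your argument is correct and follows essentially the same route as the paper's own proof: completeness turns each Cauchy orbit into a convergent one, continuity of the contractive map shows the limit is a fixed point, and contractivity forces uniqueness, so all orbits converge to the same point. The only difference is that you spell out the continuity of a contractive map and explicitly track the a priori dependence of the limit on the starting point, which the paper leaves implicit.
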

\begin{proof}\cite{Meir}
Since $(X,d)$ is complete, the sequence $\{T^nx\}$ has a limit $u$. Since $T$ is continuous,$$Tu=T(\lim T^nx)=\lim T^{n+1}x=u.$$
Thus, $u$ is a fixed point of $T.$ If $v$ is such that $Tv=v,$ then $v=u.$ Contrary, $u\neq v$ and $$d(Tu, Tv)<d(u,v)\Longleftrightarrow d(u,v)<d(u,v),$$ impossible.
\end{proof}
The condition in Proposition \ref{firstprop} is not enough in general to ensure that existence of a fixed point. But, for contractions, the existence and the uniqueness of a fixed point are proved by the famous theorem of S. Banach.
\begin{df}
$T$ is called a contraction if there is $\lambda\in[0,1)$ such that 
$$d(Tx, Ty)\leq\lambda d(x,y),\quad\textrm{for}~x,y\in X.$$
\end{df}
\begin{theorem}(Banach \cite{Banach})
 Let $(X,d)$ be a complete metric space and\\ $T:X\rightarrow X$ be a contraction. Then $T$ is a Picard operator.
 \end{theorem}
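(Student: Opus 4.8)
The plan is to reduce the theorem to Proposition \ref{firstprop}, so that the only genuine work is an estimate on the orbits. First I would observe that a contraction is contractive in the earlier sense: if $x\neq y$ then $d(x,y)>0$ and $\lambda\in[0,1)$, hence $d(Tx,Ty)\le\lambda d(x,y)<d(x,y)$. Thus the contractivity hypothesis of Proposition \ref{firstprop} holds, and since $X$ is complete it remains only to show that for every $x\in X$ the sequence $\{T^nx\}$ is Cauchy.

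To this end, fix $x\in X$ and set $x_n=T^nx$. A straightforward induction on $n$, applying the contraction inequality at each step, gives $d(x_{n+1},x_n)\le\lambda^n d(x_1,x_0)$. Then for $m>n$ the triangle inequality yields
\[
d(x_m,x_n)\le\sum_{k=n}^{m-1}d(x_{k+1},x_k)\le\Big(\sum_{k=n}^{m-1}\lambda^k\Big)d(x_1,x_0)\le\frac{\lambda^n}{1-\lambda}\,d(x_1,x_0),
\]
and since $0\le\lambda<1$ the right-hand side tends to $0$ as $n\to\infty$. Hence $\{x_n\}$ is Cauchy, and Proposition \ref{firstprop} applies, giving that $T$ is a Picard operator.

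I do not expect a real obstacle: the only place where the hypothesis $\lambda<1$ is indispensable is the convergence of the geometric series $\sum_{k\ge0}\lambda^k$, which is exactly what fails in the merely contractive case and explains why Proposition \ref{firstprop} requires the extra Cauchy assumption. If one preferred an argument not invoking Proposition \ref{firstprop}, one could instead extract $u=\lim x_n$ from completeness, use the continuity of $T$ (immediate from the contraction inequality) to get $Tu=\lim Tx_n=\lim x_{n+1}=u$, and deduce uniqueness from $d(u,v)=d(Tu,Tv)\le\lambda d(u,v)$, which forces $d(u,v)=0$ whenever $Tv=v$. I would nonetheless take the first route, since Proposition \ref{firstprop} is already available.
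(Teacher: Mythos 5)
Your proof is correct. The paper itself gives no proof of this theorem (it is stated as a classical result with a citation to Banach), so there is nothing to diverge from; your argument is the standard one --- the geometric-series estimate $d(x_m,x_n)\le\frac{\lambda^n}{1-\lambda}d(x_1,x_0)$ showing the orbit is Cauchy, followed by an appeal to Proposition \ref{firstprop} --- and it fits exactly the role that Proposition \ref{firstprop} is set up to play in this paper.
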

 The Banach Theorem is an abstract formulation of Picard iterative process.
 In the following, we present some known generalizations of this theorem.
 \begin{df} We say that  $T$ is a Meir-Keeler contraction if given  $\varepsilon>0,$ there exists $\delta>0$ such that
 $$(\forall)~x,y\in X,~\varepsilon\leq d(x,y)<\varepsilon+\delta \Longrightarrow~d(Tx, Ty)<\varepsilon.$$
 \end{df}
\begin{theorem}(\cite{Meir}) Let $(X,d)$ be a complete metric space and $T$ be a Meir-Keeler contraction. Then $T$ is a Picard operator.
\end{theorem}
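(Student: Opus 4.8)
The plan is to deduce the theorem from Proposition \ref{firstprop}, for which two ingredients are needed: that a Meir--Keeler contraction is contractive, and that every orbit $\{T^nx\}$ is Cauchy. The first is immediate: given $x\neq y$, apply the Meir--Keeler condition with $\varepsilon:=d(x,y)>0$; since $\varepsilon\le d(x,y)<\varepsilon+\delta$ we get $d(Tx,Ty)<\varepsilon=d(x,y)$. The same argument yields a dichotomy I will use repeatedly: if $d(u,v)<\varepsilon+\delta$ (with $\delta$ the Meir--Keeler constant associated to that $\varepsilon$), then $d(Tu,Tv)<\varepsilon$ --- from the Meir--Keeler condition when $d(u,v)\ge\varepsilon$, from contractivity when $0<d(u,v)<\varepsilon$, and trivially when $u=v$.

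Next, fix $x$ and set $c_n:=d(T^nx,T^{n+1}x)$. If $c_n=0$ for some $n$ the orbit is eventually constant and there is nothing to prove, so assume $c_n>0$ for all $n$; then contractivity gives $c_{n+1}<c_n$, hence $c_n\downarrow c$ for some $c\ge 0$. I claim $c=0$. If not, take $\delta$ for $\varepsilon=c$ in the Meir--Keeler condition and pick $n$ with $c\le c_n<c+\delta$; applying the condition to $T^nx$ and $T^{n+1}x$ gives $c_{n+1}=d(T(T^nx),T(T^{n+1}x))<c$, contradicting $c_{n+1}\ge c$.

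The crux is showing $\{T^nx\}$ is Cauchy. Fix $\varepsilon>0$, take the Meir--Keeler constant $\delta$, and shrink it so that $0<\delta\le\varepsilon$; using $c_n\to 0$, pick $N$ with $c_N<\delta$. I would prove by induction on $j\ge 0$ that $d(T^Nx,T^{N+j}x)<\varepsilon+\delta$: the case $j=0$ is clear, and for the inductive step write $d(T^Nx,T^{N+j+1}x)\le c_N+d\big(T(T^Nx),T(T^{N+j}x)\big)$, then apply the dichotomy above to the pair $T^Nx,\,T^{N+j}x$ (whose distance is $<\varepsilon+\delta$ by the inductive hypothesis) to bound the last term by $\varepsilon$, yielding $d(T^Nx,T^{N+j+1}x)<c_N+\varepsilon<\delta+\varepsilon$. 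With the claim in hand, for $m,n\ge N$ the triangle inequality gives $d(T^mx,T^nx)\le d(T^mx,T^Nx)+d(T^Nx,T^nx)<2(\varepsilon+\delta)\le 4\varepsilon$; since $\varepsilon$ was arbitrary, $\{T^nx\}$ is Cauchy, and Proposition \ref{firstprop} finishes the proof.

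The main difficulty lies in the third paragraph: the Meir--Keeler hypothesis only constrains a single application of $T$ to pairs lying in a particular annulus, so one must isolate the right quantity for which an induction closes --- here, the radius $\varepsilon+\delta$ about the well-chosen base point $T^Nx$. The more obvious route --- assume the orbit is not Cauchy, extract an $\varepsilon$-separated pair, and derive a contradiction --- tends to stall, because estimating the distance after one step of $T$ requires a triangle inequality that reintroduces two $c_n$ terms and so fails to beat the threshold; the direct radius bound above sidesteps this.
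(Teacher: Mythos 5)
Your proof is correct. The dichotomy you isolate (if $d(u,v)<\varepsilon+\delta$ then $d(Tu,Tv)<\varepsilon$, splitting into the Meir--Keeler annulus, the contractive case, and the trivial case) is sound, the monotone argument for $c_n\to 0$ is the standard one, and the induction on $j$ keeping the orbit of $T^Nx$ inside the ball of radius $\varepsilon+\delta$ closes correctly, after which Proposition \ref{firstprop} applies. Note that the paper itself gives no proof of this theorem --- it is quoted from \cite{Meir} --- but it does remark that Meir--Keeler contractions form a subclass of CJMP contractions and supplies a ``pedagogical proof'' of the more general Theorem \ref{thcjmp}, so the paper's implicit route is to deduce the statement as a corollary of that result. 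Your argument is, in substance, that very proof specialized to the Meir--Keeler setting: your dichotomy plays exactly the role of the paper's Lemma 1 (the one-step estimate combining a small consecutive distance with the inductive bound), your Step on $c_n\downarrow 0$ matches its Step 1, and your induction on $j$ matches its induction on $p$ in Step 2, with both arguments terminating in an appeal to Proposition \ref{firstprop}. The only cosmetic difference is that you carry the slightly larger radius $\varepsilon+\delta$ through the induction via the triangle inequality, where the paper's Lemma delivers the bound $\varepsilon$ directly; both close. Your closing remark about why the ``assume not Cauchy and extract a separated pair'' route stalls is apt --- that is the original Meir--Keeler argument, which requires a more delicate index selection, and your direct radius bound avoids it.
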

\begin{df}\label{dfmc} $T$ is said to be a \textit{CJMP contraction} (cf. \cite{Ciric}, \cite{Matkowski}, \cite{Jachymski}, \cite{Proinov}) if the following conditions holds
\begin{enumerate}[$(a)$]
    \item $T$ is contractive;
    \item (Matkowski-W\c egrzyk condition \cite{MatkowskiWe}) for every $\varepsilon>0,$ there exists $\delta=\delta(\varepsilon)>0$ such that
    $$(\forall)~x,y\in X, \varepsilon<d(x,y)<\varepsilon+\delta\Longrightarrow d(Tx,Ty)\leq\varepsilon.$$
\end{enumerate}
\end{df}
The class of CJMP-contractions contains the class of  Meir-Keeler contractions \cite{Ciric}.
\begin{theorem}(\cite{Ciric}, \cite{Matkowski})\label{thcjmp}
Let $(X,d)$ be a complete metric space and $T$ be a CJMP-contraction on $X$. Then $T$ is a Picard operator.
\end{theorem}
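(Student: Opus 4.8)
The plan is to reduce the statement to Proposition~\ref{firstprop}. Since a CJMP-contraction is in particular contractive and $(X,d)$ is complete, it suffices to show that for every $x\in X$ the sequence of iterates $x_n:=T^nx$ is Cauchy; Proposition~\ref{firstprop} then immediately yields that $T$ is a Picard operator (in particular that the fixed point exists and is unique).

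First I would dispose of the trivial case: if $x_{n_0+1}=x_{n_0}$ for some $n_0$, the sequence is eventually constant and there is nothing to prove, so assume $d_n:=d(x_n,x_{n+1})>0$ for every $n$. By condition $(a)$ we have $d_{n+1}=d(Tx_n,Tx_{n+1})<d(x_n,x_{n+1})=d_n$, so $(d_n)$ is strictly decreasing and converges to some $r\ge 0$ with $d_n>r$ for all $n$. To see that $r=0$, suppose $r>0$ and apply the Matkowski-W\c egrzyk condition $(b)$ with $\varepsilon=r$, obtaining $\delta>0$; choosing $N$ with $d_N<r+\delta$ gives $r<d_N<r+\delta$, whence $d_{N+1}\le r$ by $(b)$, contradicting $d_{N+1}>r$. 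Thus $d(x_n,x_{n+1})\to 0$.

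The core of the proof is the Cauchy property. Fix $\eta>0$, put $\varepsilon=\eta/4$, take $\delta=\delta(\varepsilon)>0$ as in $(b)$, and note we may assume $\delta\le\varepsilon$. Combining $(a)$ and $(b)$ I would first record the uniform implication $(\ast)$: if $d(x,y)<\varepsilon+\delta$ then $d(Tx,Ty)\le\varepsilon$. Indeed, this is clear when $d(x,y)=0$; it follows from contractivity when $0<d(x,y)\le\varepsilon$; and it is exactly $(b)$ when $\varepsilon<d(x,y)<\varepsilon+\delta$. Now pick $N$ with $d(x_n,x_{n+1})<\delta$ for all $n\ge N$, and prove by induction on $m\ge N$ that $d(x_N,x_m)<\varepsilon+\delta$: the cases $m=N,\,N+1$ are immediate, and for the inductive step one estimates
\[
d(x_N,x_{m+1})\le d(x_N,x_{N+1})+d(Tx_N,Tx_m)<\delta+\varepsilon ,
\]
using the inductive hypothesis together with $(\ast)$. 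Hence for all $m,n\ge N$ we get $d(x_m,x_n)\le d(x_m,x_N)+d(x_N,x_n)<2(\varepsilon+\delta)\le 4\varepsilon=\eta$, so $(x_n)$ is Cauchy, and Proposition~\ref{firstprop} finishes the argument.

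I expect the main obstacle to be the induction in the last paragraph, and in particular the bookkeeping forced by the fact that the Matkowski-W\c egrzyk condition only delivers the weak conclusion $d(Tx,Ty)\le\varepsilon$ (rather than the strict $<\varepsilon$ of a Meir-Keeler contraction); the device $(\ast)$, in which contractivity is invoked to cover the range $d(x,y)\le\varepsilon$, is precisely what keeps the iterates strictly inside the ball of radius $\varepsilon+\delta$ so that the induction closes. Everything else is routine once $d(x_n,x_{n+1})\to 0$ has been established.
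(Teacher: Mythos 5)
Your proposal is correct and follows essentially the same route as the paper: reduce to Proposition~\ref{firstprop}, prove $d(x_n,x_{n+1})\to 0$ exactly as in the paper's Step~1, and then establish the Cauchy property by an induction that covers the range $d\le\varepsilon$ via contractivity and the range $(\varepsilon,\varepsilon+\delta)$ via condition $(b)$. The only difference is bookkeeping: your two-point implication $(\ast)$ together with the invariant $d(x_N,x_m)<\varepsilon+\delta$ repackages the paper's three-point Lemma and its invariant $d(x_n,x_{n+p})\le\varepsilon$, with the triangle-inequality step moved from inside the Lemma to inside the induction.
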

We will give a pedagogical proof of the above theorem. In the proof we will need the next lemma. 
\begin{lem} We suppose that $(X, d)$ is a metric space and $T:X\rightarrow X$
 a CJMP-contraction. Let $\varepsilon>0$ and $\delta=\delta(\varepsilon)>0$ as in Definition \ref{dfmc}. If $x,y,z\in X$ so that $d(x,y)<\delta$ and $d(y,z)\leq\varepsilon,$ then $$d(Tx,Tz)\leq\varepsilon.$$
\end{lem}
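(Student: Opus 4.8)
The plan is to control $d(x,z)$ by the triangle inequality and then run a short case analysis on the size of $d(x,z)$ relative to $\varepsilon$. From $d(x,y)<\delta$ and $d(y,z)\le\varepsilon$ we get
\[
d(x,z)\le d(x,y)+d(y,z)<\delta+\varepsilon,
\]
so $d(x,z)$ lies in the interval $[0,\varepsilon+\delta)$, and exactly one of the following holds: either $d(x,z)\le\varepsilon$, or $\varepsilon<d(x,z)<\varepsilon+\delta$.

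In the first case, split off the degenerate subcase $x=z$, where $Tx=Tz$ and hence $d(Tx,Tz)=0\le\varepsilon$ trivially. If instead $x\ne z$ and $d(x,z)\le\varepsilon$, then contractivity (condition $(a)$ of Definition \ref{dfmc}) gives $d(Tx,Tz)<d(x,z)\le\varepsilon$, so in particular $d(Tx,Tz)\le\varepsilon$. In the second case, $\varepsilon<d(x,z)<\varepsilon+\delta$, so the Matkowski--W\c egrzyk condition $(b)$ applied to the pair $(x,z)$ yields directly $d(Tx,Tz)\le\varepsilon$. In all cases the desired inequality holds.

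There is no real obstacle here: the argument is just the triangle inequality together with the two defining properties of a CJMP-contraction. The only point requiring a touch of care is that contractivity is stated for distinct points, so the equality case $x=z$ must be disposed of separately before invoking $(a)$; likewise one should note that the two ranges $d(x,z)\le\varepsilon$ and $\varepsilon<d(x,z)<\varepsilon+\delta$ together cover all possibilities permitted by the bound $d(x,z)<\varepsilon+\delta$, so no value of $d(x,z)$ is missed.
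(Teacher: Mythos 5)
Your proof is correct and follows essentially the same route as the paper's: bound $d(x,z)<\varepsilon+\delta$ by the triangle inequality, dispose of $x=z$, then split into the cases $d(x,z)\le\varepsilon$ (use contractivity) and $\varepsilon<d(x,z)<\varepsilon+\delta$ (use condition $(b)$). Nothing further is needed.
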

\begin{proof}
If $x=z,$ it is clear. We can suppose that $x\neq z.$ We have two cases:
\begin{enumerate}[$1)$]
    \item $d(x,z)\leq\varepsilon.$ Since $T$ is contractive, we have $d(Tx,Tz)<d(x,z)\leq\varepsilon.$ 
    \item $d(x,z)>\varepsilon.$ We have $\varepsilon<d(x,z)<d(x,y)+d(y,z)<\delta+\varepsilon,$\\ hence $d(Tx,Tz)\leq\varepsilon.$
\end{enumerate}
\end{proof}
\begin{proof}\textit{of Theorem \ref{thcjmp}.}  We take $x\in X$ and we denote $x_n=T^nx,$ $n\in\mathbb{N}.$\\
If $d(x_n, x_{n+1})=0$ for an $n\in\mathbb{N},$ then $x_n$ is a fixed point of $T$ and the proof is finished. If $d(x_n, x_{n+1})>0,$ $(\forall) n\in\mathbb{N},$ we give the proof in two steps.\\
\\
\textit{Step 1:} $\displaystyle\lim_{n\rightarrow \infty} d(x_n,x_{n+1})=0.$ We denote: $a_n=d(x_n, x_{n+1}),~n\in\mathbb{N}.$
Since $T$ is contractive, we have $a_n<a_{n-1},$ $n\in\mathbb{N}$. We denote $a:=\lim a_n.$ If $a>0,$ then there is $n_0(a)$ such that $$a<a_n<a+\delta(a),~n\geq n_0(a).$$
Since $(b),$ it follows $a_{n+1}\leq a,$ contradiction.\\
\\
\textit{Step 2:} $\{x_n\}$ is a Cauchy sequence. From \textit{Step 1}, we have that for $\varepsilon>0,$ $(\exists) n_1=n_1(\varepsilon)$ so that $$d(x_{n-1},x_n)<\gamma(\varepsilon):=min(\varepsilon,\delta(\varepsilon)),~n\geq n_1.$$
We use the induction to prove \begin{equation}\label{eqstar}
d(x_n,x_{n+p})\leq \varepsilon, p=1,2,\ldots
\end{equation}
For $p=1:$ $d(x_n, x_{n+1})<d(x_{n-1},x_n)<\varepsilon.$
If (\ref{eqstar}) is true, with the above Lemma, we have $$d(Tx_{n-1},Tx_{n+p})\leq\varepsilon,~n\geq n_1$$ and we apply Proposition \ref{firstprop}.
\end{proof}
Recently, in 2012, Wardowski introduced a new type of contraction mappings named $F-$ contractions (or Wardowski contractions) \cite{Wardowski2}. This new type of contractions were used by several researches in the field of fixed point theory to obtain new results. More general, in 2018, Wardowski also considered nonlinear $F-$contraction (or $(\varphi,F)$-contraction).
\begin{df}(Wardowski \cite{Wardowski})
Let be $F:(0,\infty)\rightarrow\mathbb{R}$ and $\varphi:(0,\infty)\rightarrow (0,\infty).$ $T$ is said to be a nonlinear $F-$contraction (or $(\varphi,F)$-contraction) if
\begin{equation}\label{eqW}
   (\forall)~x,y\in X, Tx\neq Ty\Longrightarrow \varphi(d(x,y))+
   F(d(Tx, Ty))\leq F(d(x,y)).  
\end{equation}
\end{df}
\begin{theorem} (\cite{Wardowski})
Let $(X,d)$ be a complete metric space and $T:X\rightarrow X$ be a $(\varphi,F)$-contraction. We suppose that 
\begin{enumerate}[$(i)$]
    \item $F$ is strictly increasing;
    \item $\displaystyle \lim_{t\rightarrow 0^+}F(t)=-\infty;$
    \item $\displaystyle \liminf_{s\rightarrow t^+}\varphi(s)>0,~\textrm{for all}~ t\geq 0.$
\end{enumerate}
Then $T$ is a Picard operator.
\end{theorem}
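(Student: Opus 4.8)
The plan is to run the Picard iteration and read everything off from the nonlinear contraction inequality (\ref{eqW}). Fix $x\in X$, set $x_n=T^nx$ and $d_n=d(x_n,x_{n+1})$. If $d_n=0$ for some $n$ then $x_n$ is a fixed point and that case is settled; so assume $d_n>0$ for all $n$, which means $Tx_{n-1}\neq Tx_n$ and lets us apply (\ref{eqW}) to consecutive iterates. Since $\varphi>0$, (\ref{eqW}) gives $F(d_n)\le F(d_{n-1})-\varphi(d_{n-1})<F(d_{n-1})$, and strict monotonicity of $F$ (hypothesis $(i)$) then forces $d_n<d_{n-1}$; hence $(d_n)$ decreases to some $d\ge 0$.

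I would next show $d=0$. If $d>0$, then $d_n\downarrow d$ with $d_n>d$, so $(iii)$ at $t=d$ gives $c>0$ and $N$ with $\varphi(d_n)>c/2$ for $n\ge N$; iterating $F(d_n)\le F(d_{n-1})-\varphi(d_{n-1})$ yields $F(d_n)\le F(d_N)-(n-N)c/2\to-\infty$, impossible since $F(d_n)>F(d)$ and $F(d)\in\mathbb{R}$. So $d_n\to 0$. (Hypotheses $(i)$--$(ii)$ are the usual $F$-contraction package; summing (\ref{eqW}) over consecutive iterates gives $\sum_j\varphi(d_j)\le F(d_0)-\inf F$, so if $F$ were bounded below one would get $\varphi(d_n)\to0$ with $d_n\to0^+$, contradicting $(iii)$ at $t=0$ — thus $(ii)$, which makes $F$ unbounded below, is exactly what leaves room for a non-terminating orbit.)

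The heart of the proof is the Cauchy property of $(x_n)$. Suppose it fails: there is $\varepsilon_0>0$ with $d(x_m,x_n)\ge\varepsilon_0$ for arbitrarily large $m>n$. A monotone function has at most countably many discontinuities, so I may pick a continuity point $\varepsilon$ of $F$ with $0<\varepsilon<\varepsilon_0$. For each $k$ choose $n_k\ge k$ and then the least $m_k>n_k$ with $d(x_{n_k},x_{m_k})>\varepsilon$; minimality together with $d_n\to0$ give, for large $k$, $\varepsilon<d(x_{n_k},x_{m_k})\le d(x_{n_k},x_{m_k-1})+d_{m_k-1}\le\varepsilon+d_{m_k-1}$, so $s_k:=d(x_{n_k},x_{m_k})\to\varepsilon^+$. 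Put $t_k:=d(x_{n_k+1},x_{m_k+1})$; by the triangle inequality $|t_k-s_k|\le d_{n_k}+d_{m_k}\to0$, so $t_k\to\varepsilon$ and in particular $t_k>0$ (hence $Tx_{n_k}\neq Tx_{m_k}$) for large $k$. Then (\ref{eqW}) gives $F(t_k)\le F(s_k)-\varphi(s_k)$, and since $s_k\to\varepsilon^+$, hypothesis $(iii)$ gives $\varphi(s_k)>c/2$ eventually for some $c>0$; thus $F(t_k)\le F(s_k)-c/2$. Letting $k\to\infty$ and using continuity of $F$ at $\varepsilon$ gives $F(\varepsilon)\le F(\varepsilon)-c/2$, a contradiction. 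So $(x_n)$ is Cauchy.

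Finally, by completeness $x_n\to u$; $T$ is contractive, hence $1$-Lipschitz and continuous, so $Tu=\lim Tx_n=u$. For uniqueness: if $Tv=v$ with $v\neq u$, then either $Tv=Tu$ (so $v=u$) or (\ref{eqW}) gives $\varphi(d(u,v))\le0$, contradicting $\varphi>0$. Since the construction started from an arbitrary $x$ and always produced a sequence converging to a fixed point of $T$, and that fixed point is unique, $T^nx\to u$ for every $x$, i.e. $T$ is a Picard operator. The one delicate point is the Cauchy step: because (\ref{eqW}) only controls $F\circ d$, turning $F(t_k)\le F(s_k)-c/2$ with $t_k,s_k\to\varepsilon$ into a contradiction requires ruling out a jump of $F$ at $\varepsilon$, which is precisely why $\varepsilon$ is chosen among the continuity points of the monotone $F$.
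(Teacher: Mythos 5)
Your proof is correct. It is, however, a genuinely different route from the paper's: the paper does not prove Wardowski's theorem directly at all, but instead cites it and later recovers a strengthening of it (Theorem \ref{ourthm} and its Corollary) by setting $E=F-\varphi$, checking the compatibility conditions $(C_1)$--$(C_2)$, and invoking the general chain $(E,F)$-contraction $\Rightarrow$ CJMP-contraction $\Rightarrow$ Picard operator of Theorems \ref{ourthmlmg}--\ref{ourthmlmg2}. You instead give a self-contained iteration argument, whose one delicate point --- that the inequality $F(t_k)\le F(s_k)-c/2$ with $t_k,s_k\to\varepsilon$ must be turned into a contradiction even though $t_k$ may approach $\varepsilon$ from below --- you resolve by choosing $\varepsilon$ among the (co-countable set of) continuity points of the monotone $F$; the paper's framework dodges the same difficulty differently, by first establishing the Matkowski--W\c egrzyk condition $(b)$ and then reusing the Cauchy argument of Theorem \ref{thcjmp}, where the auxiliary Lemma keeps all relevant distances on one side of $\varepsilon$. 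Each approach has its merits: yours makes visible exactly where each hypothesis enters and shows in passing that hypothesis $(ii)$ ($\lim_{t\to 0^+}F(t)=-\infty$) is never used --- consistent with the paper's claim to improve Wardowski's result --- while the paper's route yields the statement as one instance of a reusable general theorem covering Ri, Proinov and Gubran--Alfaqih--Imdad as well. The only points worth tightening in your write-up are small: state explicitly that $m_k>n_k+1$ for large $k$ (so that minimality really gives $d(x_{n_k},x_{m_k-1})\le\varepsilon$), which follows from $d_{n_k}\to 0<\varepsilon$, and note that contractivity of $T$ (used for continuity and hence $Tu=u$) follows from $(i)$ together with $\varphi>0$ applied to (\ref{eqW}).
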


Important contributions to Wardowski contractions were given in \cite{Popescu}, \cite{Secelean1},\cite{Secelean2}, \cite{Suzuki}, \cite{Turinici} and \cite{Fulga}. See also the survey paper \cite{Karapinar}.\\
\\
We recall, also, a general theorem for fixed points.
\begin{theorem}(Ri \cite{Ri}) Let $(X, d)$ be a complete metric space and $T$ be contractive map in the following
sense: there is a function $\varphi:[0,\infty)\rightarrow [0,\infty)$ such that $\varphi(t)<t$ and $\displaystyle \limsup_{s\rightarrow t^+}\varphi(s)<t$ for all $t>0$ and $$d(Tx,Ty)\leq\varphi(d(x,y)),\quad (\forall)~x,y\in X.$$
Then $T$ has a unique fixed point $p$ in $X.$
\end{theorem}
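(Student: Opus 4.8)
The plan is to reduce everything to Proposition~\ref{firstprop}. First observe that the hypothesis $d(Tx,Ty)\le\varphi(d(x,y))$ with $\varphi(t)<t$ for $t>0$ forces $T$ to be contractive in the sense of the first Definition: if $x\neq y$ then $d(x,y)>0$, so $d(Tx,Ty)\le\varphi(d(x,y))<d(x,y)$. Hence, by Proposition~\ref{firstprop}, it will be enough to show that the Picard iterates $x_n:=T^nx$ form a Cauchy sequence for every $x\in X$; Proposition~\ref{firstprop} then gives that $T$ is in fact a Picard operator, which is stronger than the stated conclusion (in particular it yields uniqueness of the fixed point).

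Fix $x\in X$ and put $a_n:=d(x_n,x_{n+1})$. If some $a_n=0$ then $x_n$ is a fixed point and $\{x_n\}$ is eventually constant, hence Cauchy, so assume $a_n>0$ for all $n$. By contractivity $a_{n+1}=d(Tx_n,Tx_{n+1})\le\varphi(a_n)<a_n$, so $(a_n)$ decreases to some $a\ge 0$. If $a>0$, then $a_n>a$ and $a_n\to a^+$, and passing to the upper limit in $a_{n+1}\le\varphi(a_n)$ gives $a\le\limsup_{n\to\infty}\varphi(a_n)\le\limsup_{s\to a^+}\varphi(s)<a$, a contradiction; therefore $a_n=d(x_n,x_{n+1})\to 0$.

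Next I would show $\{x_n\}$ is Cauchy by contradiction. If not, there is $\varepsilon>0$ such that for every $k$ one can pick $m_k>n_k\ge k$ with $d(x_{n_k},x_{m_k})\ge\varepsilon$; since $a_n\to 0$ we have $m_k\ge n_k+2$ for large $k$, and choosing $m_k$ minimal gives $d(x_{n_k},x_{m_k-1})<\varepsilon$, whence
$$\varepsilon\le d(x_{n_k},x_{m_k})\le d(x_{n_k},x_{m_k-1})+a_{m_k-1}<\varepsilon+a_{m_k-1}.$$
Thus $t_k:=d(x_{n_k},x_{m_k})\to\varepsilon$ with $t_k\ge\varepsilon$. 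Two uses of the triangle inequality together with the contraction inequality give
$$t_k\le a_{n_k}+d(x_{n_k+1},x_{m_k+1})+a_{m_k}\le a_{n_k}+a_{m_k}+\varphi(t_k),$$
so $\varphi(t_k)\ge\varepsilon-a_{n_k}-a_{m_k}$ and hence $\limsup_{k}\varphi(t_k)\ge\varepsilon$. On the other hand, splitting the indices into those with $t_k=\varepsilon$ (where $\varphi(t_k)=\varphi(\varepsilon)<\varepsilon$) and those with $t_k>\varepsilon$ (where, as $t_k\to\varepsilon^+$, $\limsup\varphi(t_k)\le\limsup_{s\to\varepsilon^+}\varphi(s)<\varepsilon$) gives $\limsup_{k}\varphi(t_k)<\varepsilon$, a contradiction. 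So $\{x_n\}$ is Cauchy, and Proposition~\ref{firstprop} finishes the proof.

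The bookkeeping with the triangle inequality and the reduction to Proposition~\ref{firstprop} are routine; the one genuinely essential ingredient — and the step I expect to need the most care — is the passage to the upper limit, used both to get $a_n\to 0$ and in the non-Cauchy argument. There the condition $\limsup_{s\to t^+}\varphi(s)<t$ is exactly what is needed, and the only subtlety is that the distances involved approach $\varepsilon$ (resp. $a$) possibly while being equal to it, which is why the index set must be split into the cases $t_k=\varepsilon$ and $t_k>\varepsilon$ (the pointwise bound $\varphi(t)<t$ covering the former and the right-upper-limit hypothesis the latter).
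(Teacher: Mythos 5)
Your proof is correct. Every step checks: the pointwise bound $\varphi(t)<t$ gives contractivity; the monotone-limit argument with $a_{n+1}\le\varphi(a_n)$ and $a_n\downarrow a^+$ correctly uses $\limsup_{s\to a^+}\varphi(s)<a$ to force $a=0$; and the minimal-index non-Cauchy argument, including the careful split of the indices into $t_k=\varepsilon$ (handled by $\varphi(\varepsilon)<\varepsilon$) and $t_k>\varepsilon$ (handled by the right upper limit), is sound. The reduction to Proposition~\ref{firstprop} at the end is legitimate and even yields the stronger conclusion that $T$ is a Picard operator.

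Your route is, however, genuinely different from the paper's. The paper states Ri's theorem as a cited result and never proves it directly; instead it recovers (and strengthens) it as Application~1 of its main machinery: taking $F(s)=s$ and $E=\varphi$, one checks that $T$ is an $(E,F)$-contraction, Theorem~\ref{ourthmlmg} then shows $T$ satisfies the Matkowski--W\c egrzyk condition $(b)$ of Definition~\ref{dfmc}, i.e.\ is a CJMP-contraction, and Theorem~\ref{thcjmp} concludes. What the paper's detour buys is generality: the resulting Theorem~\ref{thm8} only needs $\liminf_{n}\varphi(t_n)<t$ along sequences $t_n\downarrow t$ from above, which is strictly weaker than Ri's hypothesis $\limsup_{s\to t^+}\varphi(s)<t$, and the same template simultaneously yields the Wardowski, Gubran--Alfaqih--Imdad and Proinov variants. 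What your direct proof buys is self-containedness and transparency: a short Meir--Keeler-style argument that does not pass through the CJMP framework or the lemma on sums of $\limsup$'s. Both are valid; yours would not, as written, give the sharper $\liminf$ version, since your non-Cauchy argument derives $\liminf_k\varphi(t_k)\ge\varepsilon$ and so still contradicts only the $\limsup$ hypothesis.
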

In this paper, we give two general theorems of existence and uniqueness for fixed point for applications on complete metric spaces.  Among other results, we generalize the above mention theorems of Wardowski \cite{Wardowski} and Ri\cite{Ri}. Also, we will improve some results of Gubran, Alfaqih, Imdad \cite{Gubran} and Proinov \cite{Proinov2}.
\section{The main results}

\begin{df}
Let $E,F$ be two real functions defined on $(0,\infty).$ We say that $(E, F)$ is a compatible pair of functions if the following conditions holds
\begin{enumerate}[$(C_1)$]
\item For $t,s\in(0,\infty),$ $t\leq s\Rightarrow$ $E(t)<F(s);$
\item Given $t>0$ and $(t_n)_{n\in\mathbb{N}}\subset(t,\infty)$ be a sequence with $\displaystyle\lim_{n\rightarrow\infty}t_n=t,$ then for any sequence $(s_n)_{n\in\mathbb{N}},$ $t<s_n<t_n,$ $n\in\mathbb{N}$ we have $$\limsup_{n\rightarrow\infty}(F(s_n)-E(t_n))>0.$$
\end{enumerate}
\end{df}
\begin{df}
We say that $T$ is an $(E,F)$-contraction if $(E,F)$ is a compatible pair of functions such that
\begin{equation}\label{(E,F)-contractions}
Tx\neq Ty\Rightarrow F(d(Tx,Ty))\leq E(d(x,y)).
\end{equation}
\end{df}

\begin{theorem}\label{ourthmlmg} Let $(X,d)$ be a complete metric space and $T:X\rightarrow X$ be a $(E,F)$-contraction. Then $T$ is a CJMP-contraction, hence a Picard operator.
\end{theorem}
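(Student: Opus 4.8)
The plan is to check the two conditions in Definition \ref{dfmc} directly and then quote Theorem \ref{thcjmp}. Fix a pair $x\neq y$ and abbreviate $a=d(Tx,Ty)$, $b=d(x,y)$.

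\emph{Condition $(a)$: $T$ is contractive.} If $Tx=Ty$ then $a=0<b$ and there is nothing to prove. If $Tx\neq Ty$, suppose for contradiction that $a\geq b$, i.e.\ $b\leq a$. Then $(C_1)$ gives $E(b)<F(a)$, while the defining inequality \eqref{(E,F)-contractions} gives $F(a)\leq E(b)$; together $E(b)<E(b)$, which is absurd. Hence $a<b$, that is, $d(Tx,Ty)<d(x,y)$.

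\emph{Condition $(b)$: the Matkowski--W\c egrzyk condition.} Fix $\varepsilon>0$ and argue by contradiction. If the condition fails, then for every $n\in\mathbb{N}$ there exist $x_n,y_n\in X$ with $\varepsilon<d(x_n,y_n)<\varepsilon+\frac1n$ and $d(Tx_n,Ty_n)>\varepsilon$. Set $t_n=d(x_n,y_n)$; then $(t_n)_{n}\subset(\varepsilon,\infty)$ and $t_n\to\varepsilon$. Since $d(x_n,y_n)>\varepsilon>0$ we have $x_n\neq y_n$, and since $d(Tx_n,Ty_n)>\varepsilon>0$ we have $Tx_n\neq Ty_n$; hence, using condition $(a)$ together with the hypothesis on $d(Tx_n,Ty_n)$,
$$\varepsilon<d(Tx_n,Ty_n)<d(x_n,y_n)=t_n .$$
Put $s_n=d(Tx_n,Ty_n)$, so that $\varepsilon<s_n<t_n$ for all $n$. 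On one hand, $(C_2)$ applied with $t=\varepsilon$ yields $\limsup_{n\to\infty}\bigl(F(s_n)-E(t_n)\bigr)>0$. On the other hand, \eqref{(E,F)-contractions} gives $F(s_n)=F(d(Tx_n,Ty_n))\leq E(d(x_n,y_n))=E(t_n)$ for every $n$, so $F(s_n)-E(t_n)\leq 0$ and therefore $\limsup_{n\to\infty}\bigl(F(s_n)-E(t_n)\bigr)\leq 0$ — a contradiction. Thus condition $(b)$ holds, $T$ is a CJMP-contraction, and Theorem \ref{thcjmp} shows that $T$ is a Picard operator.

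The whole argument is short; the only delicate point is arranging the strict double inequality $\varepsilon<s_n<t_n$ so that $(C_2)$ is applicable — the right-hand bound is precisely where contractivity (condition $(a)$) enters, and it is needed because $(C_2)$ only controls sequences that sit strictly between $t$ and $t_n$. Everything else is a routine dictionary translation between the $(E,F)$-axioms and the two CJMP requirements, so I do not expect any serious obstacle.
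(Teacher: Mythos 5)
Your proof is correct and follows essentially the same route as the paper's: contractivity is derived from $(C_1)$ by contradiction, and the Matkowski--W\c egrzyk condition is derived by extracting sequences $t_n=d(x_n,y_n)\to\varepsilon^+$ and $s_n=d(Tx_n,Ty_n)$ with $\varepsilon<s_n<t_n$ and playing $(C_2)$ against the inequality $F(s_n)\leq E(t_n)$. If anything, your write-up is slightly more careful than the paper's (you justify $Tx_n\neq Ty_n$ before invoking the contraction inequality, and you avoid the paper's typo $E(s_n)$ for $E(t_n)$), but the argument is the same.
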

\begin{proof}
First, we prove that $T$ is contractive. We suppose that $x\neq y$ and we prove that $$d(Tx,Ty)<d(x,y).$$ 
If $Tx=Ty$ this is clear. If $Tx\neq Ty,$ we suppose that $d(Tx,Ty)\geq d(x,y).$ By condition $(C_1)$ it follows $$F(d(Tx,Ty))> E(d(x,y)),$$ contradiction with (\ref{(E,F)-contractions}).\\

We prove that $T$ verifies the condition $(b)$ in Definition \ref{dfmc}. Contrary, there is $\varepsilon_0>0$ such that for any $\delta>0,$ there are $x_{\delta}, y_{\delta}\in X$ such that $$\varepsilon_0<d(x,y)<\varepsilon_0+\delta~\textrm{and}~d(Tx_{\delta}, Ty_{\delta})>\varepsilon_0.$$
We take $\delta=2^{-n},~n\in\mathbb{N}.$ Then there are two sequences $\{x_n\}_{n\in\mathbb{N}}, \{y_n\}_{n\in\mathbb{N}}\subset X$ such that
\begin{equation}\label{oureqthm1}
\varepsilon_0<d(x_n,y_n)<\varepsilon_0+2^{-n}
\end{equation}
and
\begin{equation}\label{oureqthm2}
d(Tx_n, Ty_n)>\varepsilon_0,~n\in\mathbb{N}.
\end{equation}
From these relations, with notations $$t_n=d(x_n,y_n),~s_n=d(Tx_n,Ty_n),~n\in\mathbb{N}.$$ 
We obtain that $$\{s_n\}_{n\in\mathbb{N}},\{t_n\}_{n\in\mathbb{N}}\subset(\varepsilon_0,\infty),$$
$\displaystyle\lim_{n\rightarrow\infty}t_n=\varepsilon_0$ and since $T$ is contractive, we have also $s_n<t_n,$ $n\in\mathbb{N}.$\\
By $(C_2),$ we have $\displaystyle\limsup (F(s_n)-E(s_n))>0.$
By (\ref{(E,F)-contractions}), we have $F(s_n)\leq E(t_n),~n\in\mathbb{N},$ hence $$\limsup_{n\rightarrow\infty}(F(s_n)-E(t_n))\leq 0,$$
contradiction.
\end{proof}
 Examining the proof of the above theorem, we observe that it takes place a more general result:
 \begin{theorem}\label{ourthmlmg2}
 Let $(X,d)$ be a complete metric space and $T:X\rightarrow X$ be a contractive mapping, which satisfies the relation (\ref{(E,F)-contractions}), where $E,F$ verifies condition $(C_2)$. Then $T$ is a CJMP-contraction, hence a Picard operator.
 \end{theorem}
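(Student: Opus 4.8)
The plan is to recycle the second half of the proof of Theorem \ref{ourthmlmg} essentially verbatim, noting that that portion of the argument never used condition $(C_1)$: there, contractivity of $T$ was \emph{derived} from $(C_1)$, whereas now it is assumed outright. So I would first observe that condition $(a)$ of Definition \ref{dfmc} holds by hypothesis, and then it only remains to verify the Matkowski--W\c egrzyk condition $(b)$; once that is in place, Theorem \ref{thcjmp} delivers the conclusion that $T$ is a Picard operator.

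To establish $(b)$, I would argue by contradiction. If $(b)$ fails, there is $\varepsilon_0>0$ such that for every $\delta>0$ there exist $x_\delta,y_\delta\in X$ with $\varepsilon_0<d(x_\delta,y_\delta)<\varepsilon_0+\delta$ and $d(Tx_\delta,Ty_\delta)>\varepsilon_0$. Choosing $\delta=2^{-n}$ for $n\in\mathbb{N}$ produces sequences $\{x_n\},\{y_n\}\subset X$, and with the notation $t_n=d(x_n,y_n)$, $s_n=d(Tx_n,Ty_n)$ we get $\{t_n\}\subset(\varepsilon_0,\infty)$ with $\lim_{n\to\infty}t_n=\varepsilon_0$ and $s_n>\varepsilon_0$ for all $n$. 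Since $d(Tx_n,Ty_n)>\varepsilon_0>0$ we have $Tx_n\neq Ty_n$, so contractivity of $T$ yields $s_n<t_n$; hence $\varepsilon_0<s_n<t_n$ for every $n$. This is precisely the configuration to which condition $(C_2)$ applies (with $t=\varepsilon_0$), giving $\limsup_{n\to\infty}(F(s_n)-E(t_n))>0$. On the other hand, $Tx_n\neq Ty_n$ together with (\ref{(E,F)-contractions}) gives $F(s_n)\leq E(t_n)$ for all $n$, so $\limsup_{n\to\infty}(F(s_n)-E(t_n))\leq 0$ --- a contradiction. Therefore $(b)$ holds, $T$ is a CJMP-contraction, and Theorem \ref{thcjmp} finishes the proof.

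There is no genuinely new obstacle here beyond what was already dealt with in Theorem \ref{ourthmlmg}; the only delicate point is checking that the sequence $\{s_n\}$ really meets the hypotheses of $(C_2)$ --- that is, that the inequalities $\varepsilon_0<s_n<t_n$ are \emph{strict}. This is exactly where the contractivity hypothesis does its work: the strict inequality $d(Tx_n,Ty_n)<d(x_n,y_n)$ is available because $Tx_n\neq Ty_n$, and it is what lets us dispense with $(C_1)$ entirely while still feeding $(C_2)$ a sequence of the required form.
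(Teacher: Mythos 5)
Your proposal is correct and follows exactly the route the paper intends: the paper gives no separate proof of Theorem \ref{ourthmlmg2}, stating only that it follows by ``examining the proof'' of Theorem \ref{ourthmlmg}, and your argument is precisely that second half of that proof (the verification of condition $(b)$ via $(C_2)$), with contractivity now taken as a hypothesis rather than derived from $(C_1)$. Your added remark that $Tx_n\neq Ty_n$ follows from $d(Tx_n,Ty_n)>\varepsilon_0>0$, so that contractivity yields the strict inequality $s_n<t_n$ needed to invoke $(C_2)$, is a useful clarification of a point the paper leaves implicit.
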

\section{Applications}

In the following, we denote by $\mathcal{R}_+$ the set of all real valued functions defined on $(0,\infty)$, which have finite limit at right in any point.

For $F\in\mathcal{R}_+,$ we denote $$F(t+0):=\lim_{s\rightarrow t^+}F(s),~t>0.$$

We begin with a Lemma.
\begin{lem}\label{ourlem}
Let $(a_n)_{n\in\mathbb{N}},$ $(b_n)_{n\in\mathbb{N}}$ be two sequences of real numbers such that $(a_n)$ is convergent and $(b_n)$ is bounded. Then $$\limsup_{n\rightarrow\infty}(a_n+b_n)=\lim_{n\rightarrow\infty}a_n+\limsup_{n\rightarrow\infty} b_n.$$
\end{lem}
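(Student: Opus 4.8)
The plan is to reduce the $\limsup$ of the sum to the $\limsup$ of $b_n$ by peeling off the convergent part $a_n$, using only the definition of $\limsup$ together with the fact that a convergent sequence contributes nothing to the oscillation. Let $a = \lim_{n\to\infty} a_n$, which exists by hypothesis; since $(b_n)$ is bounded, $(a_n + b_n)$ is bounded as well, so both $\limsup$'s in the statement are finite and the equation makes sense.

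For the inequality $\limsup_n (a_n + b_n) \le a + \limsup_n b_n$, I would fix $\eta > 0$ and use convergence of $(a_n)$ to find $N$ with $a_n < a + \eta$ for all $n \ge N$; then $a_n + b_n < a + \eta + b_n$, so $\sup_{n \ge m}(a_n + b_n) \le a + \eta + \sup_{n \ge m} b_n$ for $m \ge N$, and letting $m \to \infty$ gives $\limsup_n(a_n + b_n) \le a + \eta + \limsup_n b_n$; since $\eta$ is arbitrary the inequality follows. The reverse inequality $\limsup_n(a_n + b_n) \ge a + \limsup_n b_n$ is obtained symmetrically: write $b_n = (a_n + b_n) + (-a_n)$, note $(-a_n)$ converges to $-a$, and apply the inequality just proved (with the roles reshuffled) to get $\limsup_n b_n \le \limsup_n(a_n + b_n) + (-a) = \limsup_n(a_n+b_n) - a$, i.e. $a + \limsup_n b_n \le \limsup_n(a_n + b_n)$. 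Combining the two inequalities yields the claimed equality.

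Alternatively, one can phrase the whole thing through subsequences: pass to a subsequence $(n_k)$ along which $a_{n_k} + b_{n_k} \to \limsup_n(a_n+b_n)$; since $a_{n_k} \to a$, the sequence $b_{n_k} = (a_{n_k}+b_{n_k}) - a_{n_k}$ converges to $\limsup_n(a_n+b_n) - a$, which is therefore a subsequential limit of $(b_n)$ and hence $\le \limsup_n b_n$; and conversely, choosing $(n_k)$ with $b_{n_k} \to \limsup_n b_n$ gives $a_{n_k} + b_{n_k} \to a + \limsup_n b_n \le \limsup_n(a_n+b_n)$. Either presentation is short; I would probably use the $\varepsilon$-argument since it avoids invoking sequential compactness of the extended reals.

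There is no real obstacle here — this is a standard fact about $\limsup$. The only point requiring the slightest care is making sure that the additivity of $\limsup$ is used only in the direction where it is valid (adding a \emph{convergent} sequence, not a general one), which is exactly why the hypothesis that $(a_n)$ converges, rather than merely being bounded, is essential; the symmetric trick $b_n = (a_n+b_n) + (-a_n)$ is the one spot where this is exploited. Boundedness of $(b_n)$ is only needed to guarantee finiteness so that the arithmetic $a + \limsup_n b_n$ is well defined and the cancellations are legitimate.
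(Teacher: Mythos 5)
Your proposal is correct. Your primary route (the two-sided $\varepsilon$-argument: bound $a_n < a+\eta$ eventually to get $\limsup_n(a_n+b_n)\le a+\eta+\limsup_n b_n$, then recover the reverse inequality by writing $b_n=(a_n+b_n)+(-a_n)$) differs from the paper's, which instead characterizes the whole set $L(a_n+b_n)$ of subsequential limits, showing $L(a_n+b_n)=a+L(b_n)$ and then taking suprema. The paper's argument is essentially the ``alternative'' you sketch via subsequences, so you have in fact described both proofs; the paper's version yields slightly more (the full translation of the limit set, hence the analogous identity for $\liminf$ as well, which the paper implicitly uses later when it rewrites $(C_2)$ in terms of $\liminf E(t_n)$), while your $\varepsilon$-argument is more elementary in that it avoids extracting convergent subsequences. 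Your closing remarks correctly identify where convergence of $(a_n)$, as opposed to mere boundedness, is indispensable, and why boundedness of $(b_n)$ is needed to keep the arithmetic finite.
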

\begin{proof}
We denote by $L(x_n)$ the set of limit points for real sequence $(x_n).$ We prove that $$L(a_n+b_n)=a+L(b_n),$$ where $\displaystyle a:=\lim_{n\rightarrow\infty}a_n.$ Indeed,
$$c\in L(a_n+b_n)\Longleftrightarrow (\exists) (n_k)\subset\mathbb{N}:a_{n_k}+b_{n_k}\rightarrow c$$
which is equivalent with $$(\exists) (n_k)\subset\mathbb{N}:\lim b_{n_c}=c-a\Longleftrightarrow c-a\in L(b_n)\Longleftrightarrow c\in a+L(b_n).$$
Then $$\sup L(a_n+b_n)=a+\sup L(b_n).$$
\end{proof}

APPLICATION 1. In Theorem \ref{ourthmlmg}, we take the particular case $F(s)=s,$ $s\in(0,\infty)$ and $E$ be such that $E(t)<t,$ $t\in(0,\infty).$ Then $(C_1)$ is true:
$$t\leq s\Rightarrow E(t)<t\leq s=F(s).$$
Now, we consider $t>0$ and $(t_n)\subset (t,\infty)$ such that $$\lim_{n\rightarrow\infty} t_n=t.$$
If $(s_n)$ is a sequence such that $t<s_n<t_n,$ $n\in\mathbb{N}$ then condition from $(C_2)$ is equivalent with $$\limsup (s_n-E(t_n))>0\Longleftrightarrow t-\liminf E(t_n)>0,$$ by Lemma \ref{ourlem}.

Thus, we obtain the following result.
\begin{theorem}\label{thm8}
Let $(X,d)$ be a complete metric space and $T:X\rightarrow X.$ Let $E$ be a function $E:(0,\infty)\rightarrow (0,\infty)$ so that 
\begin{enumerate}[$\circ$]
    \item $E(t)<t$,\quad $t>0$
    \item $\displaystyle\liminf_{n\rightarrow\infty}E(t_n)<t,$ $t>0,$ for any $(t_n)\subset(t,\infty)$ with $\displaystyle\lim_{n\rightarrow\infty}t_n=t.$
    
    If $d(Tx,Ty)\leq E(d(x,y)),$ then $T$ is a Picard operator.
\end{enumerate}
\end{theorem}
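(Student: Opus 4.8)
The plan is to derive Theorem \ref{thm8} as a direct corollary of Theorem \ref{ourthmlmg} by specializing $F$ to the identity and verifying that the pair $(E,F)$ with $F(s)=s$ is a compatible pair of functions. First I would take $F:(0,\infty)\to\mathbb{R}$, $F(s)=s$, and the given $E:(0,\infty)\to(0,\infty)$, and check condition $(C_1)$: if $t\le s$, then $E(t)<t\le s=F(s)$, where the strict inequality is exactly the first hypothesis on $E$. So $(C_1)$ holds immediately. Next I would check $(C_2)$: given $t>0$ and a sequence $(t_n)\subset(t,\infty)$ with $t_n\to t$, and any sequence $(s_n)$ with $t<s_n<t_n$, I must show $\limsup_{n\to\infty}\bigl(F(s_n)-E(t_n)\bigr)>0$, i.e. $\limsup_{n\to\infty}\bigl(s_n-E(t_n)\bigr)>0$.

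The main step is the rewriting of this $\limsup$. Since $t<s_n<t_n$ and $t_n\to t$, the squeeze theorem gives $s_n\to t$, so $(s_n)$ is convergent with limit $t$. The sequence $\bigl(-E(t_n)\bigr)$ is bounded: indeed $0<E(t_n)<t_n$ and $(t_n)$ is convergent hence bounded, so $\bigl(E(t_n)\bigr)$ is bounded, hence so is $\bigl(-E(t_n)\bigr)$. Now I apply Lemma \ref{ourlem} with $a_n=s_n$ (convergent, limit $t$) and $b_n=-E(t_n)$ (bounded) to obtain
\[
\limsup_{n\to\infty}\bigl(s_n-E(t_n)\bigr)=t+\limsup_{n\to\infty}\bigl(-E(t_n)\bigr)=t-\liminf_{n\to\infty}E(t_n).
\]
By the second hypothesis on $E$, $\liminf_{n\to\infty}E(t_n)<t$, so $t-\liminf_{n\to\infty}E(t_n)>0$, which is exactly $(C_2)$. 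Thus $(E,F)$ is a compatible pair.

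Finally, the contraction hypothesis $d(Tx,Ty)\le E(d(x,y))$ for all $x,y\in X$ implies in particular that whenever $Tx\neq Ty$ we have $F(d(Tx,Ty))=d(Tx,Ty)\le E(d(x,y))$, so $T$ is an $(E,F)$-contraction in the sense of the definition. Theorem \ref{ourthmlmg} then applies: $T$ is a CJMP-contraction on the complete metric space $(X,d)$, hence a Picard operator, which is the desired conclusion. I do not anticipate any genuine obstacle here; the only point requiring a little care is confirming the boundedness hypothesis needed to invoke Lemma \ref{ourlem} (it follows from convergence of $(t_n)$ together with $0<E(t_n)<t_n$), and making sure the identity rewriting $\limsup(-E(t_n))=-\liminf E(t_n)$ is applied correctly.
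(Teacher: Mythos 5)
Your proposal is correct and follows essentially the same route as the paper: specialize Theorem \ref{ourthmlmg} to $F(s)=s$, verify $(C_1)$ from $E(t)<t$, and reduce $(C_2)$ to $t-\liminf_{n\to\infty}E(t_n)>0$ via Lemma \ref{ourlem}. Your write-up is in fact slightly more careful than the paper's, since you explicitly justify that $s_n\to t$ by squeezing and that $\bigl(-E(t_n)\bigr)$ is bounded before invoking the lemma.
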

Theorem \ref{thm8} improve the main result of \cite{Ri}.\\
\\
APPLICATION 2. We take $E(t)=F(t)-\varphi(t),~t>0,$ where\\ $\varphi:(0,\infty)\rightarrow(0,\infty).$ We take $F:(0,\infty)\rightarrow\mathbb{R}$ such that $F$ is nondecreasing:
$$t\leq s\Rightarrow F(t)\leq F(s).$$ Then $t\leq s\Rightarrow E(t)<F(t)\leq F(s).$\\
\\
We verify the condition $(C_2).$ Let be $t>0$ and $(t_n)\subset (t,\infty)$, with $\displaystyle\lim_{n\rightarrow\infty} t_n=t$ and a sequence $(s_n),$ $t<s_n<t_n,n\in\mathbb{N}.$
Condition $(C_2)$ is equivalent with $$\limsup(F(s_n)-F(t_n)+\varphi(t_n))>0$$
and from Lemma \ref{ourlem} this is equivalent with $$F(t+0)-F(t+0)+\limsup\varphi(t_n)>0.$$
\begin{theorem}
We suppose that $F:(0,\infty)\rightarrow\mathbb{R}$ is such that $F$ is nondecreasing and $\varphi:(0,\infty)\rightarrow (0,\infty)$ is such that,$$(iii')~\textrm{for all}~ t>0,  \displaystyle\limsup_{n\rightarrow\infty}\varphi(t_n)>0~\textrm{if}~ (t_n)\subset(t,\infty)~\textrm{and}~t_n\rightarrow t.$$ If $T:X\rightarrow X$ is so that $$Tx\neq Ty\Rightarrow \varphi(d(x,y))+F(d(Tx,Ty))\leq F(d(x,y)),$$ then $T$ is a Picard operator.
\end{theorem}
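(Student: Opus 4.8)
The plan is to obtain this theorem as a special case of Theorem \ref{ourthmlmg}, by setting $E(t):=F(t)-\varphi(t)$ for $t>0$ and showing that $(E,F)$ is a compatible pair of functions; the hypothesis on $T$ then says precisely that $T$ is an $(E,F)$-contraction.

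\emph{Checking $(C_1)$.} For $0<t\le s$ I would note that $\varphi(t)>0$ gives $E(t)=F(t)-\varphi(t)<F(t)$, while the monotonicity of $F$ gives $F(t)\le F(s)$; hence $E(t)<F(s)$, which is $(C_1)$.

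\emph{Checking $(C_2)$.} This is the step I expect to require the most care. Fix $t>0$, a sequence $(t_n)\subset(t,\infty)$ with $t_n\to t$, and an arbitrary sequence $(s_n)$ with $t<s_n<t_n$. Writing $F(s_n)-E(t_n)=\big(F(s_n)-F(t_n)\big)+\varphi(t_n)$, I would first use the fact that a real-valued nondecreasing function on $(0,\infty)$ has, at every point, a finite right-hand limit $F(t+0)=\inf_{s>t}F(s)$; since $s_n\to t^{+}$ and $t_n\to t^{+}$ with $s_n,t_n>t$, both $F(s_n)$ and $F(t_n)$ tend to $F(t+0)$, so $a_n:=F(s_n)-F(t_n)\to 0$. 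If the sequence $\big(\varphi(t_n)\big)$ is bounded, Lemma \ref{ourlem} applies and yields $\limsup_{n}\big(a_n+\varphi(t_n)\big)=\limsup_{n}\varphi(t_n)$, which is strictly positive by hypothesis $(iii')$; if $\big(\varphi(t_n)\big)$ is unbounded above, then along a suitable subsequence $a_n+\varphi(t_n)\to+\infty$, so the $\limsup$ is again positive. Either way $\limsup_{n}\big(F(s_n)-E(t_n)\big)>0$, which is $(C_2)$.

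\emph{Conclusion.} For $x,y\in X$ with $Tx\ne Ty$ we have $x\ne y$, hence $d(x,y)>0$ and $d(Tx,Ty)>0$, and the assumed inequality $\varphi(d(x,y))+F(d(Tx,Ty))\le F(d(x,y))$ rearranges to $F(d(Tx,Ty))\le F(d(x,y))-\varphi(d(x,y))=E(d(x,y))$, i.e. (\ref{(E,F)-contractions}). Thus $T$ is an $(E,F)$-contraction on the complete metric space $X$, and Theorem \ref{ourthmlmg} gives that $T$ is a CJMP-contraction, hence a Picard operator. The only genuinely delicate point is the verification of $(C_2)$: one must observe that monotonicity of $F$ forces the one-sided limit $F(t+0)$ to exist and be finite, so that the ``telescoping'' term $a_n$ vanishes, and one must separately handle the case in which $\varphi(t_n)$ is not bounded, where Lemma \ref{ourlem} does not literally apply but the conclusion survives because the relevant $\limsup$ equals $+\infty$.
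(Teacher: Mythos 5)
Your proposal is correct and follows essentially the same route as the paper: set $E=F-\varphi$, check $(C_1)$ from $\varphi>0$ and the monotonicity of $F$, check $(C_2)$ by writing $F(s_n)-E(t_n)=\bigl(F(s_n)-F(t_n)\bigr)+\varphi(t_n)$ and using that $F(s_n)$ and $F(t_n)$ both tend to $F(t+0)$, then invoke Theorem \ref{ourthmlmg}. You are in fact slightly more careful than the paper, which applies Lemma \ref{ourlem} without observing that $(\varphi(t_n))$ need not be bounded; your separate handling of the unbounded case closes that small gap.
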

Using Theorem \ref{ourthmlmg2}, it takes place a more general result, which improve  Theorem 2.1 in \cite{Wardowski}, Theorem 4 in  \cite{Vujakovic} and generalize Corollary 2 in \cite{Popescu}.
\begin{theorem}\label{ourthm}
Let $(X,d)$ be a complete metric space and $T:X\rightarrow X$ be a $(\varphi,F)$-contraction. We suppose that $T$ is contractive, $F\in\mathcal{R}_+$ and\\ $\varphi:(0,\infty)\rightarrow (0,\infty)$ verifies the condition $(iii')$. Then $T$ is a CJMP-contraction.
\end{theorem}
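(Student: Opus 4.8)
The plan is to deduce Theorem \ref{ourthm} from the abstract result Theorem \ref{ourthmlmg2} by making the right choice of auxiliary functions, namely
$$E(t):=F(t)-\varphi(t),\qquad t\in(0,\infty).$$
With this choice, since $\varphi>0$ on $(0,\infty)$, the $(\varphi,F)$-contraction inequality (\ref{eqW}) reads, for $Tx\neq Ty$,
$$F(d(Tx,Ty))\leq F(d(x,y))-\varphi(d(x,y))=E(d(x,y)),$$
which is exactly relation (\ref{(E,F)-contractions}) for the pair $(E,F)$. Since $T$ is assumed contractive, Theorem \ref{ourthmlmg2} will apply the moment we verify that $(E,F)$ satisfies condition $(C_2)$. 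Note that $(C_1)$ is \emph{not} needed, which is precisely why the weak hypothesis $F\in\mathcal{R}_+$ suffices here in place of the monotonicity used in the preceding theorem.

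Next I would check $(C_2)$ directly. Fix $t>0$, a sequence $(t_n)\subset(t,\infty)$ with $t_n\to t$, and an arbitrary sequence $(s_n)$ with $t<s_n<t_n$ for all $n$. Then $s_n\to t$ as well, and both sequences tend to $t$ from the right; since $F\in\mathcal{R}_+$, the right limit $F(t+0)$ exists and is finite, so $F(s_n)\to F(t+0)$ and $F(t_n)\to F(t+0)$, hence $a_n:=F(s_n)-F(t_n)\to 0$. Writing the quantity in $(C_2)$ as
$$F(s_n)-E(t_n)=\bigl(F(s_n)-F(t_n)\bigr)+\varphi(t_n)=a_n+\varphi(t_n),$$
I would apply Lemma \ref{ourlem}, with $(a_n)$ convergent (to $0$) and $b_n:=\varphi(t_n)$, to obtain $\limsup_{n\to\infty}(F(s_n)-E(t_n))=\limsup_{n\to\infty}\varphi(t_n)$, which is strictly positive by condition $(iii')$. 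This is exactly $(C_2)$, and then Theorem \ref{ourthmlmg2} yields that $T$ is a CJMP-contraction (hence a Picard operator).

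In truth the argument is mostly bookkeeping: all the genuine work has been absorbed into Theorem \ref{ourthmlmg2} and the elementary Lemma \ref{ourlem}. The one point deserving a moment's care is the observation that $s_n\to t^{+}$ (so that $F(s_n)$ and $F(t_n)$ have the \emph{same} limit $F(t+0)$ and their difference cancels, leaving only the $\varphi$-term), since this is where the hypothesis $F\in\mathcal{R}_+$ is used. A minor caveat is that Lemma \ref{ourlem} is stated for bounded $(b_n)$; if $(\varphi(t_n))$ happens to be unbounded, then $a_n+\varphi(t_n)$ has a subsequence tending to $+\infty$ and $(C_2)$ holds trivially, so that case costs only one line. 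I do not expect any real obstacle.
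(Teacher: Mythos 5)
Your proposal is correct and follows essentially the same route as the paper: the authors also set $E(t)=F(t)-\varphi(t)$, invoke Theorem \ref{ourthmlmg2}, and verify $(C_2)$ via Lemma \ref{ourlem} using $F(s_n)\to F(t+0)$ and $F(t_n)\to F(t+0)$ together with condition $(iii')$. Your extra remark handling the case where $(\varphi(t_n))$ is unbounded is a small point of rigor the paper leaves implicit, but it does not change the argument.
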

\begin{cor} Let $(X,d)$ be a complete metric space and $T:X\rightarrow X$ be a $(\varphi,F)$-contraction such that $F$ is continuous at right, $\varphi$ verifies $(iii')$ and $T$ is contractive. Then $T$ is a Picard operator.
\end{cor}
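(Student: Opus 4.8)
The plan is to derive this corollary immediately from Theorem \ref{ourthm} together with Theorem \ref{thcjmp}, so the only real work is a short bookkeeping observation connecting the hypothesis on $F$ to the running assumption $F\in\mathcal R_+$. First I would note that if $F$ is continuous at the right at every point $t>0$, then the one-sided limit $\lim_{s\to t^+}F(s)$ exists and coincides with the finite value $F(t)$; hence $F(t+0)=F(t)\in\mathbb R$ for all $t>0$, which is precisely what it means for $F$ to belong to $\mathcal R_+$. In other words, right-continuity is a strengthening of membership in $\mathcal R_+$, so nothing is lost by invoking the earlier result.

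Having made this identification, I would check that all the hypotheses of Theorem \ref{ourthm} are in place: $(X,d)$ is a complete metric space, $T$ is a $(\varphi,F)$-contraction, $T$ is contractive (assumed), $F\in\mathcal R_+$ (just verified), and $\varphi$ satisfies condition $(iii')$ (assumed). Theorem \ref{ourthm} then yields that $T$ is a CJMP-contraction. Finally, since $(X,d)$ is complete and $T$ is a CJMP-contraction, Theorem \ref{thcjmp} gives that $T$ is a Picard operator, which is the desired conclusion.

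I do not expect any genuine obstacle here; the substantive content is already carried by Theorem \ref{ourthm} (and, through it, Theorem \ref{ourthmlmg2}), and the corollary is essentially a convenient packaging under the more familiar hypothesis of right-continuity. If one wished to make the argument more self-contained, an alternative would be to verify condition $(C_2)$ directly for the pair $E=F-\varphi$, using right-continuity of $F$ so that $F(s_n)-F(t_n)\to F(t+0)-F(t+0)=0$ along the relevant sequences and then applying Lemma \ref{ourlem} to reduce $(C_2)$ to $(iii')$; but routing through Theorem \ref{ourthm} is the cleanest path and I would present it that way.
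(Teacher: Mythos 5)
Your proposal is correct and matches the paper's (implicit) intent: the corollary is stated immediately after Theorem \ref{ourthm} precisely because right-continuity of $F$ gives $F\in\mathcal{R}_+$, so Theorem \ref{ourthm} yields that $T$ is a CJMP-contraction and Theorem \ref{thcjmp} then gives the Picard property. No difference in approach worth noting.
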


APPLICATION 3. The following result improve the principal theorem in \cite{Gubran}.
\begin{theorem}
Let $(X,d)$ be a complete metric space and $T:X\rightarrow X$ be a $(\varphi,F)$-contraction such that $F:(0,\infty)\rightarrow\mathbb{R}$ is nondecreasing and \\$\varphi:(0,\infty)\rightarrow(0,\infty)$ verifies condition
\begin{enumerate}[$(iii'')$]
    \item For every strictly decreasing sequence $\{t_n\}_{n\in\mathbb{N}}\subset (0,\infty),$
    $$\lim_{n\rightarrow\infty}\varphi(t_n)=0\Longrightarrow\lim_{n\rightarrow\infty}t_n=0.$$
\end{enumerate}
Then $T$ is a Picard operator.
\end{theorem}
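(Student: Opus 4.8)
The plan is to obtain this theorem as a corollary of the theorem stated just before Theorem~\ref{ourthm} — the one which, for a complete $(X,d)$, a $(\varphi,F)$-contraction $T$, a nondecreasing $F$, and $\varphi$ satisfying $(iii')$, concludes that $T$ is a Picard operator. Since the $(\varphi,F)$-contraction inequality and the monotonicity of $F$ are hypotheses common to both statements, everything reduces to the single implication $(iii'')\Rightarrow(iii')$. (Alternatively one may invoke Theorem~\ref{ourthmlmg2} with $E:=F-\varphi$, exactly as in APPLICATION~2: $(C_1)$ is immediate from $F(t)-\varphi(t)<F(t)\le F(s)$; $T$ is contractive because $\varphi>0$ forces $F(d(Tx,Ty))<F(d(x,y))$, incompatible with $d(Tx,Ty)\ge d(x,y)$ since $F$ is nondecreasing; and $(C_2)$ reduces — using that $t<s_n<t_n\to t$ and $F$ nondecreasing give $F(s_n)-F(t_n)\to 0$ because both terms tend to $F(t+0)$, then Lemma~\ref{ourlem} — to $\limsup_n\varphi(t_n)>0$, which is again $(iii')$.)

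To prove $(iii'')\Rightarrow(iii')$ I would argue by contradiction. If $(iii')$ fails there are $t>0$ and $(t_n)\subset(t,\infty)$ with $t_n\to t$ and $\limsup_n\varphi(t_n)\le 0$; since $\varphi$ is positive this forces $\lim_n\varphi(t_n)=0$. As every $t_n$ lies strictly above $t$, and $t=\inf_m t_m$, one extracts greedily a strictly decreasing subsequence $(t_{n_k})_k$, which still satisfies $t_{n_k}\to t$ and $\varphi(t_{n_k})\to 0$; then $(iii'')$, applied to the strictly decreasing sequence $(t_{n_k})$, gives $\lim_k t_{n_k}=0$, i.e. $t=0$, a contradiction. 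Hence $(iii')$ holds. Noting also that a nondecreasing real function on $(0,\infty)$ automatically has finite right-hand limits, so $F\in\mathcal{R}_+$, no hypothesis of the cited theorem is missing, and it yields at once that $T$ is a Picard operator.

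The only delicate point is the bookkeeping inside $(iii'')\Rightarrow(iii')$: one must make sure the negation of ``$\limsup_n\varphi(t_n)>0$ for every admissible sequence'' really produces a single sequence along which $\varphi\to 0$, and that from a sequence lying strictly above its limit a strictly decreasing subsequence can always be pulled out, so that $(iii'')$ — which speaks only of strictly decreasing sequences — becomes applicable. Beyond this the argument is routine; in essence the theorem records that condition $(iii'')$ is no weaker than $(iii')$.
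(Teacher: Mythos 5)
Your proposal is correct and follows essentially the same route as the paper: reduce everything to the implication $(iii'')\Rightarrow(iii')$, extract from a sequence $(t_n)\subset(t,\infty)$ with $t_n\to t$ and $\varphi(t_n)\to 0$ a strictly decreasing subsequence, and apply $(iii'')$ to force $t=0$. Your write-up is in fact slightly cleaner than the paper's, which garbles the monotonicity direction (writing ``nondecreasing''/``strictly increasing'' where a nonincreasing, eventually strictly decreasing subsequence is meant).
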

\begin{proof} We prove that $(iii'')\Longrightarrow(iii').$\\
\\
Let be $t>0.$ If $(\exists)t_n\subset (t,\infty),~ t_n\rightarrow t$ and $\displaystyle \limsup_{n\rightarrow\infty}\varphi(t_n)=0$ it follows that $\displaystyle\lim_{n\rightarrow\infty}\varphi(t_n)=0.$\\

By the Monotone Subsequence Theorem, we can suppose that $(t_n)_{n\in\mathbb{N}}$ is nondecreasing. If an infinite number of $\{t_n\}_{n\in\mathbb{N}}$ are equal terms between them, then the sequence of these terms is a constant sequence with limit $t$, contrar to $t_n>t,$ $n\in\mathbb{N}.$

It follows that only a finite number of $\{t_n\}$ are equal terms between them. Then, there is $n_0\in\mathbb{N}$ such that for $n\geq n_0,$ $(t_n)_{n\geq n_0}$ is strictly increasing and, by hypothesis, $t=0,$ contradiction.

\end{proof}
APPLICATION 4. In Theorem \ref{ourthmlmg}, we take $E(t)=\alpha F(t),$ $t\in(0,\infty),$ where $\alpha$ is a constant in $[0,1)$ and $F\in\mathcal{R}_+.$

\begin{theorem} Let $(X,d)$ be a complete metric space and $T:X\rightarrow X$ be a mapping such that:
$$Tx\neq Ty\Longrightarrow F(d(Tx,Ty))\leq \alpha F(d(x,y)).$$
We suppose that: 
\begin{enumerate}[$(C'_1)$]
    \item for $t,s\in (0,\infty),$ $t\leq s$ $\Longrightarrow \alpha F(t)<F(s);$
    \item  $F(t+0)>0,~t\in(0,\infty)$
\end{enumerate}
Then $T$ is a Picard operator.
\end{theorem}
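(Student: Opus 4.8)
The plan is to read this statement as the special case of Theorem~\ref{ourthmlmg} in which one sets $E(t)=\alpha F(t)$ for $t\in(0,\infty)$. Indeed, the assumed implication $Tx\neq Ty\Rightarrow F(d(Tx,Ty))\leq\alpha F(d(x,y))$ is exactly the defining inequality (\ref{(E,F)-contractions}) of an $(E,F)$-contraction for this choice of $E$, so the whole proof reduces to verifying that $(E,F)=(\alpha F,F)$ is a compatible pair of functions. Once $(C_1)$ and $(C_2)$ are established, Theorem~\ref{ourthmlmg} gives at once that $T$ is a CJMP-contraction, hence a Picard operator; note in particular that the contractivity of $T$ is part of the conclusion there and need not be assumed here.

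Condition $(C_1)$ is literally the hypothesis $(C'_1)$, since $E(t)=\alpha F(t)$. For $(C_2)$, fix $t>0$, a sequence $(t_n)\subset(t,\infty)$ with $t_n\to t$, and a sequence $(s_n)$ with $t<s_n<t_n$ for every $n$. From $t<s_n<t_n\to t$ the squeeze theorem yields $s_n\to t$ with $s_n>t$, that is $s_n\to t^+$, and similarly $t_n\to t^+$. Since $F\in\mathcal{R}_+$ the right limit $F(t+0)$ exists and is finite, and testing this one-sided limit along the sequences $(s_n)$ and $(t_n)$ gives $F(s_n)\to F(t+0)$ and $F(t_n)\to F(t+0)$. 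Both $F(s_n)$ and $\alpha F(t_n)$ therefore converge, so (directly, or by Lemma~\ref{ourlem} with $a_n=-\alpha F(t_n)$ and $b_n=F(s_n)$)
$$\limsup_{n\rightarrow\infty}(F(s_n)-E(t_n))=\lim_{n\rightarrow\infty}(F(s_n)-\alpha F(t_n))=(1-\alpha)F(t+0),$$
which is strictly positive by $(C'_2)$ together with $\alpha\in[0,1)$. This is precisely $(C_2)$, so $(\alpha F,F)$ is a compatible pair, the inequality (\ref{(E,F)-contractions}) holds, and Theorem~\ref{ourthmlmg} applies, completing the proof.

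The computation is essentially routine; the only delicate point is the passage from the numerical convergences $s_n,t_n\to t^+$ to the function-value convergences $F(s_n),F(t_n)\to F(t+0)$, which is exactly where the standing hypothesis $F\in\mathcal{R}_+$ (existence of finite right limits) is used, in the form that a right limit of a function may be computed along any sequence decreasing to the point from above. I expect no genuine obstacle beyond keeping track of this.
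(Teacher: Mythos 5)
Your proposal is correct and follows exactly the route the paper intends: Application 4 is set up precisely as Theorem~\ref{ourthmlmg} with $E=\alpha F$, and the paper leaves the verification of $(C_1)$ and $(C_2)$ implicit, which you carry out correctly (including the key use of $F\in\mathcal{R}_+$ to get $F(s_n),F(t_n)\to F(t+0)$ and the conclusion $\limsup(F(s_n)-\alpha F(t_n))=(1-\alpha)F(t+0)>0$ from $(C'_2)$ and $\alpha\in[0,1)$). No gaps.
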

\begin{cor} Let $(X,d)$ be a complete metric space and $T:X\rightarrow X$ be a mapping and $F$ be such that
\begin{enumerate}
    \item $F$ is nondecreasing;
    \item $F(s)>0$, $s>0$;
    \item $Tx\neq Ty\Longrightarrow F(d(Tx,Ty))\leq\alpha F(d(x,y))$
\end{enumerate}
Then $T$ is a Picard operator.
\end{cor}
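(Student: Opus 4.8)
The statement is a routine specialization of the theorem of Application~4: that theorem already yields ``$T$ is a Picard operator'' once one knows $F\in\mathcal{R}_+$ and that the conditions $(C'_1)$ and $(C'_2)$ hold, and hypothesis~3 of the corollary is exactly the inequality $Tx\neq Ty\Rightarrow F(d(Tx,Ty))\leq\alpha F(d(x,y))$ required there. So the whole task is to show that ``$F$ nondecreasing'' together with ``$F(s)>0$ for all $s>0$'' implies $F\in\mathcal{R}_+$, $(C'_1)$ and $(C'_2)$.

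First I would record the elementary fact that a nondecreasing real function $F$ on $(0,\infty)$ has, at every point $t>0$, a finite right limit $F(t+0)=\lim_{s\to t^+}F(s)=\inf_{s>t}F(s)$; finiteness holds because this infimum is bounded below by $F(t)$ and above by $F(t')$ for any fixed $t'>t$. Hence $F\in\mathcal{R}_+$, so that the notation $F(t+0)$ used in the theorem of Application~4 is legitimate. Moreover the same identity gives $(C'_2)$ at once: since $F(s)\geq F(t)$ for all $s>t$ and $F(t)>0$ by hypothesis~2, we get $F(t+0)=\inf_{s>t}F(s)\geq F(t)>0$ for every $t>0$.

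Next I would check $(C'_1)$. Fix $t,s\in(0,\infty)$ with $t\leq s$. By monotonicity $F(t)\leq F(s)$, hence $\alpha F(t)\leq\alpha F(s)$. Since $1-\alpha>0$ and $F(s)>0$ we have $(1-\alpha)F(s)>0$, i.e.\ $\alpha F(s)<F(s)$. Combining the two inequalities gives $\alpha F(t)<F(s)$, which is $(C'_1)$. With $F\in\mathcal{R}_+$, $(C'_1)$ and $(C'_2)$ all verified, the theorem of Application~4 applies and $T$ is a Picard operator.

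I do not anticipate any real obstacle: the argument is pure bookkeeping, translating the transparent ``monotone and positive'' hypotheses into the abstract conditions of the previous theorem. The only step deserving explicit mention is the finiteness of the right-hand limit of $F$, which is what places $F$ in the class $\mathcal{R}_+$ and is supplied by monotonicity alone.
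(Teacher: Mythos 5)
Your proposal is correct and matches the paper's (implicit) intent: the paper states this corollary without proof as an immediate specialization of the theorem of Application~4, and your verification that monotonicity plus positivity of $F$ yield $F\in\mathcal{R}_+$, $(C'_1)$ and $(C'_2)$ is exactly the routine bookkeeping that was left to the reader. All three checks (finiteness of the right limit, $F(t+0)\geq F(t)>0$, and $\alpha F(t)\leq\alpha F(s)<F(s)$) are sound.
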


The natural question that arise is the following: Are there any non monotone functions which satisties the condition $(C'_1)$?\\
\\
The answer is affirmative. Indeed, we can take the following function:
$$F(t)=\begin{cases}\dfrac{5}{2},~0<t<\dfrac{1}{4}\\
\\
\dfrac{1+t}{\sqrt{t}},~t\geq\dfrac{1}{4}
\end{cases}$$
$F$ is continuous and verifies $$\dfrac{2}{5}F(t)<F(s),$$
for all $t,s\in(0,\infty), t\leq s.$\\
\\
APPLICATION 5. After the above results were obtain, we saw the paper \cite{Proinov2}, where the author obtain new fixed point theorems, that extend and unify many earlier results, by assuming that T satisfies
a contractive-type condition. The following result improve the Theorem 3.6 of the paper \cite{Proinov2} of P.D. Proinov.
\begin{theorem}
Let $(X,d)$ be a complete metric space and $T:X\rightarrow X$ be a mapping satisfying condition (\ref{(E,F)-contractions}), where the functions $F, E:(0,\infty)\rightarrow\mathbb{R}$ verifies the following conditions:
\begin{enumerate}[$(p_1)$]
    \item $F$ is nondecreasing;
    \item $E(t)<F(t)$ for any $t>0$;
    \item $\displaystyle \liminf_{t\rightarrow\varepsilon^+}E(t_n)<F(\varepsilon^+)$ for any $\varepsilon>0,$ if $(t_n)\subset(\varepsilon,\infty),$ $t_n\rightarrow\varepsilon.$ 
\end{enumerate}
\end{theorem}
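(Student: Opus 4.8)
The natural strategy is to show that $(E,F)$ is a compatible pair of functions and that $T$ is contractive, and then to invoke Theorem~\ref{ourthmlmg} (equivalently, to verify only $(C_2)$ together with contractivity and invoke Theorem~\ref{ourthmlmg2}); the conclusion that $T$ is a Picard operator is then immediate.

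First I would dispatch the easy parts. Condition $(C_1)$ is immediate: for $0<t\le s$, properties $(p_2)$ and $(p_1)$ give $E(t)<F(t)\le F(s)$. Contractivity of $T$ is equally quick: the case $Tx=Ty$ is trivial, while if $x\ne y$, $Tx\ne Ty$ and $d(Tx,Ty)\ge d(x,y)$, then $(p_1)$ and $(p_2)$ force $F(d(Tx,Ty))\ge F(d(x,y))>E(d(x,y))$, contradicting (\ref{(E,F)-contractions}); hence $d(Tx,Ty)<d(x,y)$ for $x\ne y$.

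The substance is condition $(C_2)$. Fix $t>0$, a sequence $(t_n)\subset(t,\infty)$ with $t_n\to t$, and any $(s_n)$ with $t<s_n<t_n$. Since $s_n\to t$ from the right and $F$ is nondecreasing, $F(s_n)\to F(t+0)$, and this limit is a finite real number (being bounded below by $F(t/2)$). By $(p_2)$, $E(t_n)<F(t_n)\to F(t+0)$, so $(E(t_n))_n$ is bounded above. If $\liminf_n E(t_n)=-\infty$, then $\limsup_n(F(s_n)-E(t_n))=+\infty>0$; otherwise $(E(t_n))_n$ is bounded and Lemma~\ref{ourlem} yields
$$\limsup_{n\to\infty}(F(s_n)-E(t_n))=\lim_{n\to\infty}F(s_n)-\liminf_{n\to\infty}E(t_n)=F(t+0)-\liminf_{n\to\infty}E(t_n),$$
which is strictly positive by $(p_3)$ (read with $\varepsilon=t$ and the $\liminf$ taken over $n\to\infty$). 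Either way $(C_2)$ holds, so $(E,F)$ is a compatible pair, $T$ is an $(E,F)$-contraction, and Theorem~\ref{ourthmlmg} shows $T$ is a CJMP-contraction, hence a Picard operator.

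The only delicate point, and the part I would write out most carefully, is the verification of $(C_2)$: one must ensure that $F(t+0)$ is finite (so that Lemma~\ref{ourlem} applies with the convergent sequence $a_n=F(s_n)$), and must treat the degenerate case $\liminf_n E(t_n)=-\infty$ separately; one also has to read the indexing in hypothesis $(p_3)$ as $\liminf_{n\to\infty}E(t_n)<F(\varepsilon^+)$. Everything else is a routine translation through the compatible-pair machinery already in place.
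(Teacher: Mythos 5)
Your proposal is correct and follows essentially the same route as the paper: verify $(C_1)$ from $(p_1)$ and $(p_2)$, verify $(C_2)$ from $(p_3)$ via Lemma~\ref{ourlem}, and invoke Theorem~\ref{ourthmlmg}. In fact your write-up is slightly more careful than the paper's (you check finiteness of $F(t+0)$ and treat the case $\liminf_n E(t_n)=-\infty$ separately, and you correctly read the paper's displayed conclusion, which contains a typo writing $\liminf F(t_n)$ where $\liminf E(t_n)$ is meant), so no changes are needed.
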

Then $T$ is a $CJMP$-contraction, hence a Picard operator.
\begin{proof}
We apply Theorem \ref{ourthmlmg}. The condition $(C_1)$ folllows from $(p_1)$ and $(p_2)$:
$$\textrm{For}~t,s\in(0,\infty),~t\leq s\Longrightarrow E(t)<F(t)\leq F(s).$$
The condition $(C_2)$ folllows from $(p_3)$ and Lemma \ref{ourlem}:

Given $\varepsilon>0$ and $(t_n)_{n\in\mathbb{N}}\subset(\varepsilon,\infty)$ such that $\displaystyle \lim_{n\rightarrow\infty}t_n=\varepsilon,$ then for any sequence $(s_n)_{n\in\mathbb{N}},$ $\varepsilon<s_n<t_n,~n\in\mathbb{N},$ we have $$\limsup_{n\rightarrow\infty}(F(s_n)-E(t_n))>0,$$ which is equivalent with $$F(\varepsilon^+)-\liminf_{n\rightarrow\infty}F(t_n)>0$$
\end{proof}


\begin{flushright}
 P.  G\u avru\c ta, L. Manolescu\\
\textit{
{ \normalsize Department of Mathematics, Politehnica University of Timi\c{s}oara, }\\
{ \normalsize Pia\c{t}a Victoriei no.2, 300006 Timi\c{s}oara, Rom\^{a}nia
}}\\

\hspace{5mm}{ \normalsize \textbf{E-mail}: pgavruta@gmail.com\\ laura.manolescu@upt.ro}
\end{flushright}

\end{document}